\newtheorem{theorem}{Theorem}[section]
\newtheorem{lemma}[theorem]{Lemma}
\newtheorem{corollary}[theorem]{Corollary}
\newtheorem{claim}[theorem]{Claim}
\newtheorem{problem}[theorem]{Problem}
\newenvironment{proof}
      {\medskip\noindent{\bf Proof:}\hspace{1mm}}
      {\hfill$\Box$\medskip}
\def\qed{\ifvmode\mbox{ }\else\unskip\fi\hskip 1em plus 10fill$\Box$}
\def\eps{\varepsilon}
\long\def\ignore#1{}
\def\rc{\advance\leftskip by 0pt plus 40em\rightskip=\leftskip
  \parfillskip=0pt \spaceskip=.3333em \xspaceskip=.5em
  \pretolerance=9999 \tolerance=9999 \hyphenpenalty=9999
  \exhyphenpenalty=9999}
\begin{document}

\title{Decomposition of a cube into\\ nearly equal smaller cubes} \author{Peter Frankl
  \thanks{R\'enyi Institute, Hungarian Academy of Sciences, H--1364 Budapest, POB 127, Hungary. Email: {\tt peter.frankl@gmail.com}}
  \and Amram Meir\thanks{York University, 1121 Steeles Ave West, apt. PH-206, Toronto, Ontario, M2R-3W7, Canada. Email: {\tt ameir@rogers.com}}
  \and J\'anos Pach\thanks{R\'enyi Institute and EPFL, Station 8, CH--1014 Lausanne, Switzerland.
    Email: {\tt pach@cims.nyu.edu}. Supported by Swiss National Science Foundation Grants 200020-
144531 and 200021-137574.}}

\date{}

\maketitle

\begin{abstract}
Let $d$ be a fixed positive integer and let $\eps>0$. It is shown that for every sufficiently large $n\ge n_0(d,\eps)$, the $d$-dimensional unit cube can be decomposed into exactly $n$ smaller cubes such that the ratio of the side length of the largest cube to the side length of the smallest one is at most $1+\eps$. Moreover, for every $n\ge n_0$, there is a decomposition with the required properties, using cubes of at most $d+2$ different side lengths. If we drop the condition that the side lengths of the cubes must be roughly equal, it is sufficient to use cubes of two different sizes.
\end{abstract}

\section{Introduction}

It was shown by Dehn~\cite{De03} that, for $d\ge 2$, in any decomposition (tiling) of the $d$-dimensional unit cube into finitely many smaller cubes, the side length of every participating cube must be rational. Sprague~\cite{S40} proved that in the plane there are infinitely many decompositions consisting of pairwise incongruent squares. Such a decomposition is called {\em perfect}. Brooks, Smith, Stone, and Tutte~\cite{BSST40} developed a method to generate all perfect decompositions of squares, by reformulating the problem as a problem for flows in electrical networks. Duijvestijn~\cite{Du78} discovered the unique perfect decomposition of a square into the smallest number of squares: it consists of $21$ pieces. It is not hard to see that in $3$ and higher dimensions no perfect tilings exist~\cite{BSST40}.

\smallskip

Fine and Niven~\cite{FN46} and Hadwiger raised the problem of characterizing, for a fixed $d\ge 2$, the set $S_d$ of all integers $n$ such that the $d$-dimensional unit cube can be decomposed into $n$ not necessarily pairwise incongruent cubes. Obviously, $i^d\in S_d$ for every positive integer $i$. Hadwiger observed that no positive integer smaller than $2^d$, or larger than $2^d$ but smaller than $2^d+2^{d-1}$, belongs to $S_d$. On the other hand, Pl\"uss~\cite{P72} and Meier~\cite{M74} showed that for any $d\ge 2$, there exists $n_0(d)$ such that all $n\ge n_0(d)$ belong to $S_d$. It is known that $n_0(2)=6$ and it is conjectured that $n_0(3)=48$ (see~\cite{CFG91}, problems C2, C5). The best known general upper bound $n_0(d) \leq (2d-2)((d+1)^d-2)-1$ is due to Erd\H os~\cite{E74}. It is conjectured that this can be improved to $n_0(d)\le c^d$ for an absolute constant $c$.

\smallskip

To show that $n_0(2)\le 6,$ consider arbitrary decompositions of the square into $6, 7,$ and $8$ smaller squares; see Fig. 1.

\bigskip
\medskip
\begin{figure}[h]
\centering
\smallskip
\includegraphics[height=1in]{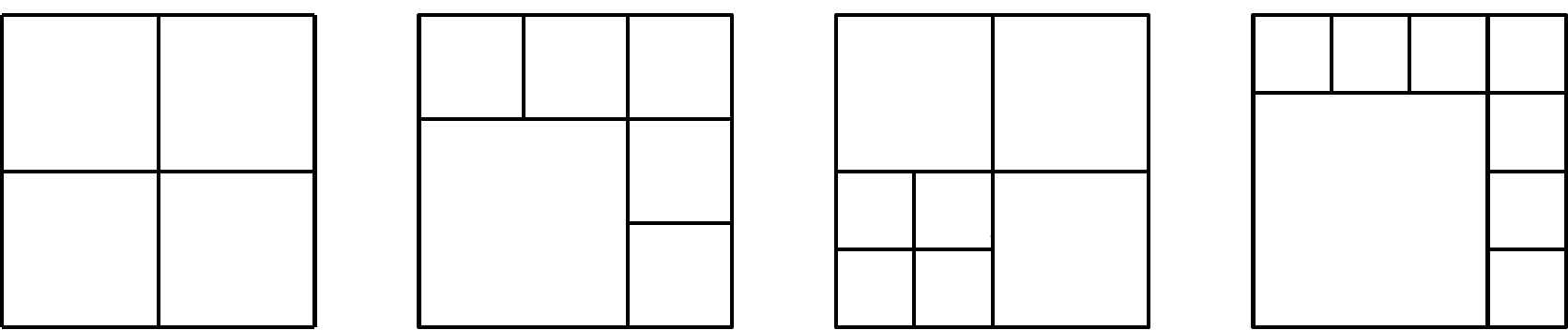}
\smallskip
\caption{Decompositions of the square into $4$, $6$, $7$, and $8$ squares. There exists no decomposition into $5$ squares.}
\medskip
\end{figure}
\medskip

Notice that from any decomposition into $n$ squares, one can easily obtain a decomposition into $n+3$ squares by replacing one of the squares, $Q$, with {\em four} others whose side lengths are half of the side length of $Q$. If we are careless, during this process we may create squares of many different sizes. In particular, for most values of $n$, the ratio of the side length of the largest square of the decomposition to the the side length of the smallest square is at least $2$.

\smallskip

Amram Meir asked many years ago whether for any $d\ge 2, \eps>0$, and for every sufficiently large $n\ge n_0(d,\eps)$, there exists a decomposition of a $d$-dimensional cube into $n$ smaller cubes such that the above ratio is smaller than $1+\eps$. The aim of this paper is to answer this question in the affirmative. In Section 2, we give a simple construction in the plane, which does not seem to generalize to higher dimensions.
\bigskip

\medskip
\noindent{\bf Theorem 1.} {\em For any integer $n\ge 6$ that is not a square number, there exists a tiling of a square with smaller squares of two different sizes such that the ratio of their side length tends to $1$ as $n\rightarrow\infty$.}
\medskip

Of course, if $n$ is a square number, then any square can be decomposed into precisely $n$ smaller squares of the same size.
\smallskip

In Section 3, we review some elementary number-theoretic facts needed for the proof of the following theorem, which will be established in Section 4.

\medskip

\noindent{\bf Theorem 2.} {\em For any integer $d\ge 2$ and any $\eps>0$, there exists $n_0=n_0(d,\eps)$ with the following property. For every $n\ge n_0$, the $d$-dimensional unit cube can be decomposed into $n$ smaller cubes such that the ratio of the side length of the largest subcube to the side length of the smallest one is at most $1+\eps$. Moreover, for every $n\ge n_0$, there is a decomposition with the required properties, using subcubes of at most $d+2$ different side lengths.}
\medskip

\begin{figure}[h]
\centering
\smallskip
\includegraphics[height=1.5in]{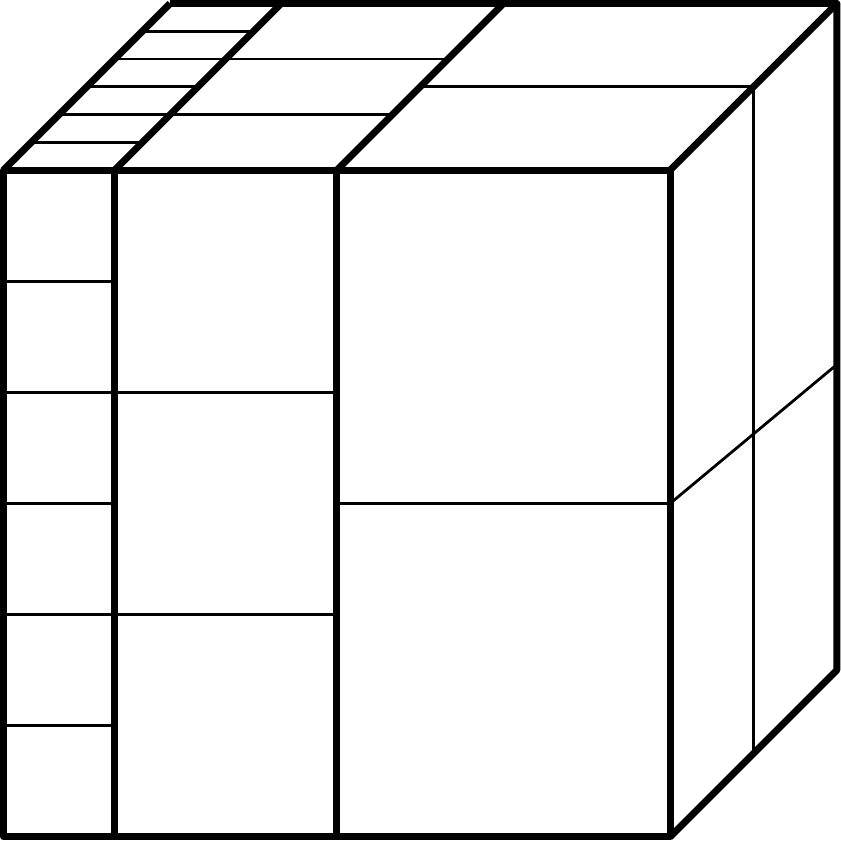}
\smallskip
\caption{A decomposition of the $3$-dimensional cube into $49$ cubes.}
\medskip
\end{figure}

The idea of the proof of Theorem 2 is the following. If $n$ is of the form $a^{3d}$ for some positive integer $a$, then we first divide the unit cube into $a^{2d}$ {\em small} cubes of side length $1/a^2$. In the second step, we subdivide each small cube into $a^d$ even smaller subcubes of side length $1/a^3$, and we are done. We call these subcubes {\em tiny}. If $n$ is not of this special form, we have $a^{3d}<n<a^{3(d+1)}$ for some $a$. In this case, after the first step, the number of tiny cubes will be smaller than $n$. Therefore, in the second step, we have to subdivide some of the small cubes into {\em slightly more} than $a^d$ equal subcubes. Unfortunately, this simple strategy does not necessarily work: the number of small cubes that need to be subdivided into more than $a^d$ pieces may exceed $a^{2d}$, the total number of small cubes produced in the first step. To overcome this difficulty, in addition, a certain number of small cubes will be subdivided into {\em fewer} than $a^d$ tiny cubes. The details are worked out in Section 4.
\smallskip

In the last section, we prove that for any $d\ge 3$ and any sufficiently large $n$ depending on $d$, it is possible to tile a cube with precisely $n$ smaller cubes of at most {\em three} different sizes; see Theorem~\ref{pro}. We close the paper with a few open problems.

\section{The planar case -- Proof of Theorem 1}

For convenience, we start with a simple observation that allows us to disregard small values of $n$ and to suppose in the rest of the argument that $n$ is sufficiently large.

\begin{lemma}\label{L0}
For every integer $n\ge 6$, a square can be tiled with $n$ smaller squares of at most {\em two} different sizes.
\end{lemma}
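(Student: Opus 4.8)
The plan is to treat even and odd $n$ by two different elementary constructions, bridged by a single subdivision move that preserves the two--size property.

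For \emph{even} $n$ I would use a ``corner'' construction. Write $n=2k+2$ with $k\ge 2$, place one large square of side $\tfrac{k}{k+1}$ in a corner of the unit square, and fill the remaining $L$--shaped region (of width $\tfrac{1}{k+1}$) with squares of side $\tfrac{1}{k+1}$: a strip of $k+1$ of them along the full side and a strip of $k$ along the top, i.e.\ $2k+1$ small squares in all. This uses $1+(2k+1)=2k+2=n$ squares of exactly two sizes, $\tfrac{k}{k+1}$ and $\tfrac{1}{k+1}$, and as $k$ runs over $2,3,4,\dots$ it realizes every even $n\ge 6$.

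For \emph{odd} $n\ge 9$ the idea is to start from the even number $n-3\ge 6$, take its corner tiling just described, and then split the single large square into a $2\times 2$ block of four congruent squares. This raises the square count by $3$, from $n-3$ to $n$, and the crucial point is that it keeps the number of distinct sizes at most two: the four new squares all have side $\tfrac12\cdot\tfrac{k}{k+1}=\tfrac{k}{2(k+1)}$, while the old small squares have side $\tfrac{1}{k+1}=\tfrac{2}{2(k+1)}$, so only two side lengths occur (and they even coincide when $k=2$, giving the $3\times 3$ grid for $n=9$). Thus every odd $n\ge 9$ is covered.

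The only value left is $n=7$, which I would handle directly: divide the unit square into four squares of side $\tfrac12$ and then split one of them into four squares of side $\tfrac14$, giving $3+4=7$ squares of two sizes. (Equivalently, this is the degenerate $k=1$ instance of the splitting move applied to the $2\times 2$ grid.) I do not expect a genuine obstacle here; the one step to watch is the size bookkeeping in the splitting move---comparing the side $\tfrac{k}{2(k+1)}$ of the four new squares with the side $\tfrac{1}{k+1}$ of the pre--existing small squares, so that no third size is ever introduced---together with the trivial range conditions ($k\ge 2$ for the even construction, and $n-3\ge 6$ so that the odd construction has an even corner tiling to split).
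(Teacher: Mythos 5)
Your proposal is correct and is essentially the paper's own proof: the paper builds the same corner tiling (phrased as extending a unit square to side $1+\frac1k$ with $2k+1$ squares of side $\frac1k$, which is your construction up to rescaling) to get all even $n=2k+2\ge 6$, and then splits the large square into four congruent quarters to get $2k+5$, covering all odd $n\ge 7$ including your separately treated $n=7$ (the $k=1$ case). Your size bookkeeping in the splitting step matches the paper's observation that only the sides $\frac12$ (relative to the split square) and $\frac1k$ occur.
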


\begin{proof} Consider the first, second, and last tilings depicted in Fig. 1. They can be generalized as follows. For every integer $k\ge 1$, take the unit square and extend it to a square $S_k$ of side length $1+\frac1k$ by adding $2k+1$ squares of side length $\frac1k$ along its upper side and right side. We obtain a tiling with $2k+2$ squares of two different sizes, unless $k=1$ and all $4$ squares are of the same size.

By dividing the original unit square of this tiling into $4$ equal squares, we obtain a tiling of $S_k$ with $2k+5$ squares of side lengths $\frac12$ and $\frac1k$. As $k$ runs through all positive integers, we obtain tilings with $n$ squares, for $n=4$ and all $n\ge 6$.
\end{proof}
\medskip

Next, we write every positive integer $n>36$ in a special form.

\begin{claim}\label{claim1}
Let $n$ be a positive integer satisfying $a^2<n<(a+1)^2$ for some positive integer $a$. Then there exists an integer $b$ such that $a<b\le 2a$ and

(i)\;\;\; either\;\; $n=a^2+b$

(ii)\;\; or\;\; $n=(a+1)^2-b$.
\end{claim}

\noindent{\bf Proof:} If $n\le a^2+a$, then (ii) holds. If $n>a^2+a$, then (i) is true. \hfill $\Box$
\medskip

\begin{claim}\label{claim2}
Let $a$ and $b$ be positive integers with $b>a\ge 6$. Then there exists an integer $m$ with $\frac{b-2}4\le m\le\frac{b-1}2$ such that precisely one of the following conditions is satisfied.

(i)\;\;\;\; $b=2m+1=(m+1)^2-m^2,$

(ii)\;\;\;  $b=4m=(m+1)^2-(m-1)^2,$

(iii)\;\;   $b=4m+2=2(m+1)^2-2m^2.$ \hfill $\Box$
\end{claim}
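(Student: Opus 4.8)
The plan is to argue by a case analysis on the residue of $b$ modulo $4$. The motivation is that the three target quantities $2m+1$, $4m$, and $4m+2$ are, as $m$ ranges over the integers, precisely the odd numbers, the multiples of $4$, and the integers congruent to $2$ modulo $4$. Hence each positive integer $b$ falls into exactly one of the three cases, and this will also account for the word ``precisely'' in the statement.

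First I would record the three algebraic identities on which everything rests; each is a one-line computation: $(m+1)^2-m^2=2m+1$, $(m+1)^2-(m-1)^2=4m$, and $2(m+1)^2-2m^2=4m+2$. Thus it suffices to produce, for the given $b$, an integer $m$ realizing the appropriate one of the equalities $b=2m+1$, $b=4m$, or $b=4m+2$.

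Next comes the case split. If $b$ is odd, set $m=(b-1)/2$, so that (i) holds; if $b\equiv 0\pmod 4$, set $m=b/4$, so that (ii) holds; and if $b\equiv 2\pmod 4$, set $m=(b-2)/4$, so that (iii) holds. In each case $m$ is manifestly a nonnegative integer, and I would then verify the range condition $\frac{b-2}4\le m\le\frac{b-1}2$ by elementary inequalities: in case (i) the value $(b-1)/2$ is exactly the upper endpoint, in case (iii) the value $(b-2)/4$ is exactly the lower endpoint, and in case (ii) the value $b/4$ lies strictly between them (the only nontrivial bound, $b/4\le(b-1)/2$, amounting to $b\ge 2$). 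For the uniqueness part, I would observe that $2m+1$, $4m$, and $4m+2$ are pairwise distinct modulo $4$ for every integer $m$, so $b$ can equal at most one of them, and exactly one condition is therefore satisfied.

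Finally, I expect the only point requiring genuine care—rather than a genuine obstacle—to be checking that $m$ is large enough for the identities to be meaningful in the intended geometric application, in particular that $m\ge 1$ so that the term $(m-1)^2$ in (ii) corresponds to a legitimate positive side length. Here the hypothesis $b>a\ge 6$, which forces $b\ge 7$, does the job: it guarantees $m\ge 2$ in cases (ii) and (iii) and $m\ge 3$ in case (i). No part of the argument is difficult; the entire content lies in organizing the three residue classes and confirming the endpoint inequalities.
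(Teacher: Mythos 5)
Your proof is correct, and it is exactly the argument the paper has in mind: the authors state Claim~\ref{claim2} with no written proof (the $\Box$ follows the statement immediately), treating as obvious the observation that every integer falls into precisely one of the residue classes realized by $2m+1$, $4m$, and $4m+2$. Your case split modulo $4$, the endpoint checks for $\frac{b-2}4\le m\le\frac{b-1}2$, and the disjointness-of-residues argument for ``precisely one'' supply cleanly the details the paper omits.
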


\noindent{\bf Proof of Theorem 1:} In view of Lemma~\ref{L0}, it is sufficient to prove Theorem 1 for sufficiently large $n$. From now on, suppose that $n\ge 36$. We can also assume that $n$ is not a perfect square, otherwise, the statement is trivially true. Then we have $a^2<n<(a+1)^2$, for some $a\ge 6$. Let $b$ be the integer whose existence is guaranteed by Claim~\ref{claim1}, and write $b$, as in Claim~\ref{claim2}, in the form (i), (ii), or (iii). Note that $a, b,$ and $m$ are uniquely determined, and we clearly have $b>a>\sqrt{n}-1\rightarrow\infty$ and $m\ge\frac{b-2}4\rightarrow\infty$, as $n\rightarrow\infty$.
\medskip

\begin{figure}[h]
\centering
\smallskip
\includegraphics[height=2.3in]{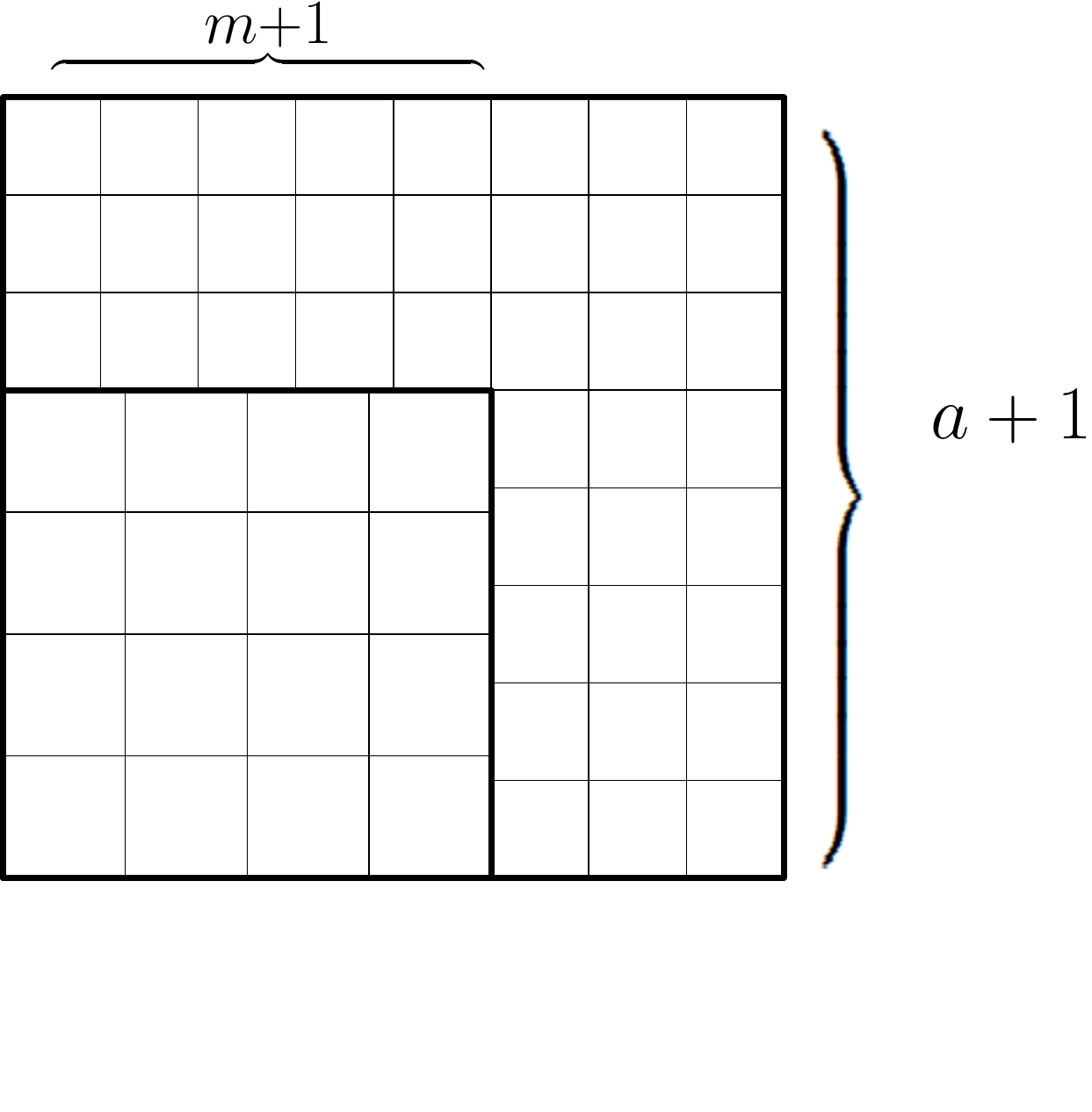}
\caption{Illustration to {\bf Case 1}. The parameters are $n=55,$ $a=7,$ $b=9,$ and $m=4,$ so that we have $n=(a+1)^2-(m+1)^2+m^2.$ }
\end{figure}

\begin{figure}[h]
\centering
\smallskip
\includegraphics[height=2.3in]{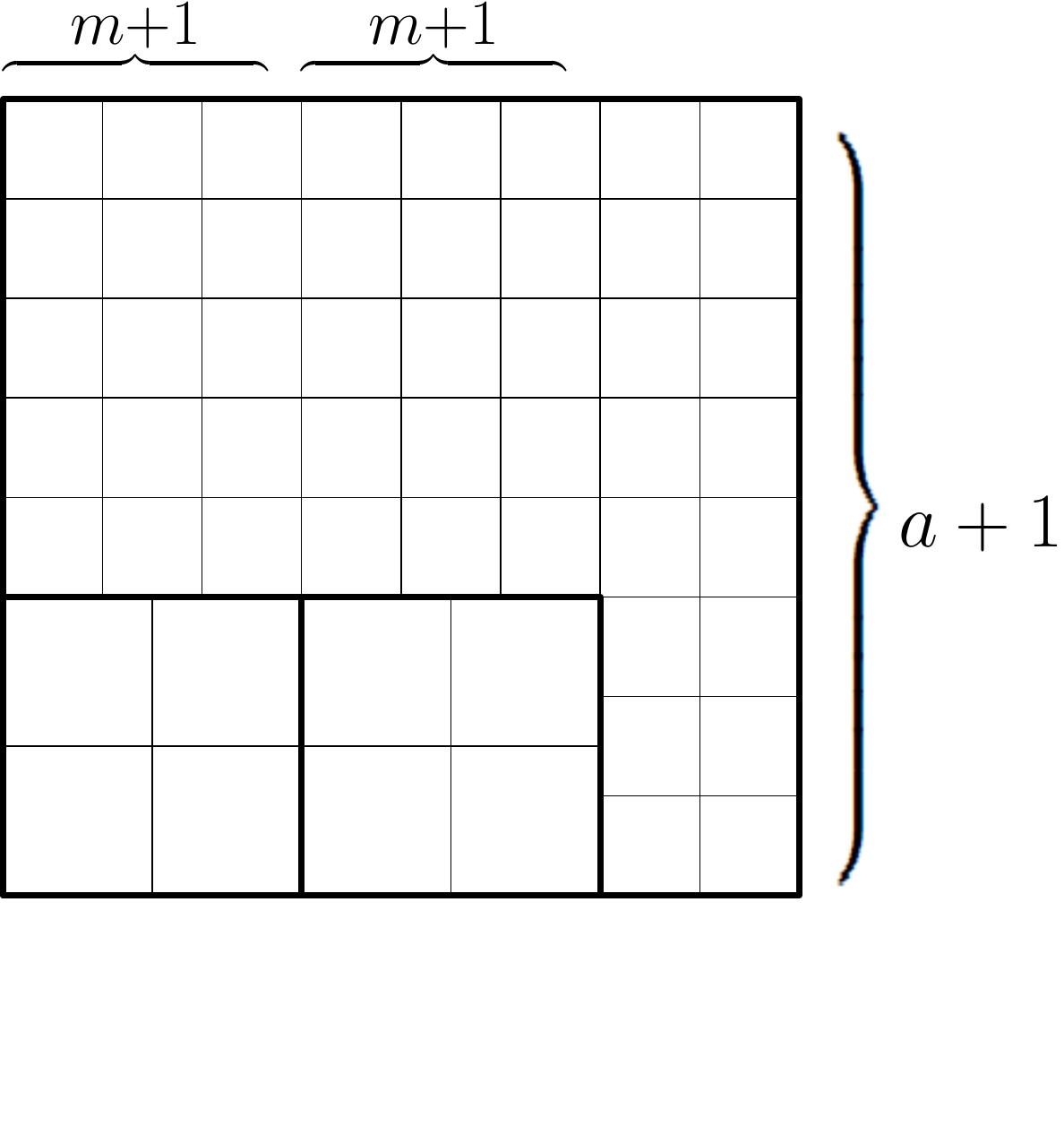}
\caption{Illustration to {\bf Case 2}. The parameters $n=54,$ $a=7,$ $b=10,$ and $m=2,$ so that we have $n=(a+1)^2-2(m+1)^2+2m^2.$ }
\end{figure}

If $b$ satisfies conditions (i) or (ii) in Claim~\ref{claim2}, we obtain $n$ as an expression consisting of three squares, two with positive signs and one with a negative sign. More precisely, we have
\begin{equation}\label{keplet1}
n=p^2-q^2+r^2\;\;\;\; {\rm with}\;\;\;\; p>q,
\end{equation}
where $p\in\{a,a+1\},$ and $q, r\in\{m-1,m,m+1\}$.

\smallskip

If $b$ satisfies condition (iii) in Claim~\ref{claim2}, we get
\begin{equation}\label{keplet2}
n=p^2-2q^2+2r^2\;\;\;\; {\rm with}\;\;\;\; p\ge 2q,
\end{equation}
where $p\in\{a,a+1\},$ and $q, r\in\{m,m+1\}$.
\medskip

We construct slightly different tilings in the above two cases.

\smallskip

\noindent{\bf Case 1:} If (\ref{keplet1}) holds, take a $p\times p$ square divided into unit squares, and replace a $q\times q$ part of it by a tiling consisting of $r^2$ squares of side length $\frac{r}{q}$. See Fig. 3.

\smallskip
\noindent{\bf Case 2:}
If  (\ref{keplet2}) holds, take a $p\times p$ square divided into unit squares, choose two disjoint $q\times q$ subsquares in it, and tile each of them with $r^2$ squares of side length $\frac{r}{q}$. See Fig. 4.

In both cases, we obtain a tiling of a large square that consists of precisely $n$ smaller squares such that the ratio of their side lengths is at most $\frac{m+1}{m-1}=1+O(\frac1{\sqrt{n}})$. This completes the proof of Theorem 1. \hfill $\Box$

\begin{problem}
Let $\rho(n)$ denote the smallest ratio of the maximum side length of a square to the minimum side length of a square over all decompositions of a square into $n$ smaller ones. Describe the asymptotic behavior of the function $$\limsup_{n\rightarrow\infty}\rho(n)-1.$$
\end{problem}

It follows from the above proof that $\rho(n)-1=O(\frac1{\sqrt{n}})$, but we have no good lower bound.

\section{Number-theoretic preliminaries}

Before turning to the proof of Theorem 2, in this short section we collect and prove some simple facts we need from elementary number theory.

\begin{lemma}\label{L1}
Let $a_1<\ldots<a_r$ and $b_1<\ldots<b_s$ be positive integers whose greatest common divisor is $1$.

(i) For every integer $k$, there exist integers $x_1,\ldots, x_r, y_1,\ldots, y_s\ge 0$ with $$\sum_{i=1}^rx_ia_i-\sum_{j=1}^sy_jb_j=k.$$

(ii) Moreover, we can assume that $\max_{i}x_i<b_s$ or $\max_{j}y_j<a_r$.
\end{lemma}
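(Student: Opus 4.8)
The plan is to view the quantity $E(\mathbf{x},\mathbf{y})=\sum_{i=1}^r x_i a_i-\sum_{j=1}^s y_j b_j$ as ranging over $\mathbb{Z}$ and to exploit the trivial relation $a_i b_j - b_j a_i=0$. Concretely, for any pair of indices $i,j$ the \emph{exchange move} that replaces $x_i$ by $x_i+b_j$ and simultaneously $y_j$ by $y_j+a_i$ leaves the value of $E$ unchanged, since it alters $E$ by $b_j a_i - a_i b_j=0$; its inverse decreases $x_i$ by $b_j$ and $y_j$ by $a_i$ and is admissible (keeps all coefficients nonnegative) precisely when $x_i\ge b_j$ and $y_j\ge a_i$. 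These two moves are the only tools needed.

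For part (i), I would start from B\'ezout: since $\gcd(a_1,\dots,a_r,b_1,\dots,b_s)=1$, there are integers $c_i,d_j$ with $\sum_i c_i a_i+\sum_j d_j b_j=1$, and multiplying by $k$ and setting $p_i=kc_i$, $q_j=-kd_j$ gives an \emph{integer} solution $\sum_i p_i a_i-\sum_j q_j b_j=k$ in which some coefficients may be negative. To remove the negative coefficients I would apply forward exchange moves. Each such move only \emph{increases} the two coefficients it touches, so a negative $q_j$ can be raised to a nonnegative value by repeatedly applying the move for some fixed $i$ (which also raises $x_i$), and likewise a negative $p_i$ can be raised using some fixed $j$. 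Because every move is monotone nondecreasing in all coordinates, clearing one coordinate never reintroduces a negative value elsewhere, and each coordinate is fixed after finitely many steps; hence the process terminates with all $x_i,y_j\ge 0$, proving (i).

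For part (ii), I would take any nonnegative solution produced in (i) and apply \emph{inverse} exchange moves as long as one is admissible. Each inverse move strictly decreases the potential $\sum_i x_i+\sum_j y_j$ by $a_i+b_j>0$, so the procedure must halt. At a terminal solution no inverse move is admissible, i.e.\ for every pair $i,j$ we have $x_i<b_j$ or $y_j<a_i$. Now comes the one genuinely clever point, the dichotomy: if some coefficient satisfies $x_{i_0}\ge b_s=\max_j b_j$, then $x_{i_0}\ge b_j$ for \emph{every} $j$, which forces $y_j<a_{i_0}\le a_r$ for all $j$, so $\max_j y_j<a_r$; and if no such $i_0$ exists, then $\max_i x_i<b_s$. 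Thus one of the two asserted bounds always holds, completing the proof.

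The step I expect to require the most care is the termination bookkeeping in part (i): one must verify that repairing one negative coordinate cannot push another coordinate below zero, which is exactly why it matters that the forward move increases \emph{both} coefficients it acts on. The remaining subtlety is purely combinatorial and is resolved cleanly by the maximality of $b_s$ and $a_r$ in the dichotomy above.
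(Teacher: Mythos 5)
Your proof is correct and takes essentially the same route as the paper: part (i) is the standard B\'ezout argument (the paper simply notes it ``goes back to Euclid''), and your inverse exchange move $x_i\mapsto x_i-b_j$, $y_j\mapsto y_j-a_i$, with termination via the decreasing potential $\sum_i x_i+\sum_j y_j$, is exactly the paper's reduction step for (ii). The only cosmetic difference is that the paper triggers the reduction under the stronger condition $x_i\ge b_s$ and $y_j\ge a_r$, so its terminal state yields the disjunction in (ii) directly, whereas your weaker trigger $x_i\ge b_j$, $y_j\ge a_i$ needs (and you correctly supply) the short dichotomy based on the maximality of $b_s$ and $a_r$.
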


\begin{proof}
Part (i) goes back to Euclid. As for part (ii), if $x_i\ge b_s$ and $y_j\ge a_r$ for some $i$ and $j$, then we can replace $x_i$ with $x_i-b_j$ and $y_j$ with $y_j-a_i$, and (i) continues to hold. Repeating this step, if necessary, (ii) follows.
\end{proof}

In the last section, we will also use the following well known statement, due to Sylvester~\cite{Sy84}.

\begin{lemma}\label{L*}
Let $a_1$ and $a_2$ be positive integers whose greatest common divisor is $1$. Then for every integer $k\ge (a_1-1)(a_2-1)$, there exist integers $x_1, x_2\ge 0$ with $k=x_1a_1+x_2a_2$. \hfill $\Box$
\end{lemma}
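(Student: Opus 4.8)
The plan is to prove this tight Frobenius-type threshold directly. First I would invoke B\'ezout's identity: since $\gcd(a_1,a_2)=1$, there are integers $u,v$ with $ua_1+va_2=1$, and multiplying through by $k$ produces at least one integer representation $k=x_1a_1+x_2a_2$, where $x_1,x_2$ are integers but possibly negative. The entire point is then to show that, once $k$ is large enough, one can force both coefficients to be nonnegative.

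The key preparatory step is a normalization. Every integer solution of $x_1a_1+x_2a_2=k$ is obtained from a fixed one by the substitution $(x_1,x_2)\mapsto(x_1-ta_2,\;x_2+ta_1)$ with $t\in\Z$. Choosing $t$ suitably, I may assume $0\le x_1\le a_2-1$; this is the unique representative whose first coefficient lies in the residue window $\{0,\ldots,a_2-1\}$ modulo $a_2$ (here I use that $a_1$ is invertible mod $a_2$). After this reduction it remains only to check that the second coefficient is nonnegative.

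The heart of the argument is a one-line inequality. From $x_1\le a_2-1$ we get $x_2a_2=k-x_1a_1\ge k-(a_2-1)a_1$. Using the hypothesis $k\ge(a_1-1)(a_2-1)=a_1a_2-a_1-a_2+1$ together with $(a_2-1)a_1=a_1a_2-a_1$, this yields $x_2a_2\ge-(a_2-1)>-a_2$, whence $x_2>-1$; since $x_2$ is an integer, $x_2\ge0$, as required.

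The step I expect to carry the weight is pinning down the \emph{sharp} constant $(a_1-1)(a_2-1)$ rather than some cruder bound. A weaker threshold such as $k\ge(a_2-1)a_1$ would follow immediately from the normalization alone, but the sharp value relies on the observation that $x_2a_2$ is a multiple of $a_2$, so the strict inequality $x_2a_2>-a_2$ already forces $x_2\ge0$. To confirm the threshold cannot be lowered, I would remark in passing that $a_1a_2-a_1-a_2=(a_1-1)(a_2-1)-1$ is not representable: any $x_1a_1+x_2a_2=a_1a_2-a_1-a_2$ with $x_1,x_2\ge0$ would give $(x_1+1)a_1+(x_2+1)a_2=a_1a_2$, forcing $a_1\mid x_2+1$ and $a_2\mid x_1+1$ by coprimality, hence $x_1\ge a_2-1$ and $x_2\ge a_1-1$, which makes $(x_1+1)a_1+(x_2+1)a_2\ge 2a_1a_2>a_1a_2$, a contradiction.
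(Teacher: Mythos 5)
Your proof is correct, but note that there is nothing in the paper to compare it with: the paper states this lemma without proof, citing it as a well-known result of Sylvester~\cite{Sy84} (hence the $\Box$ immediately after the statement). Your argument is the standard self-contained proof of Sylvester's theorem: a B\'ezout solution, the normalization $0\le x_1\le a_2-1$ via the substitution $(x_1,x_2)\mapsto(x_1-ta_2,\,x_2+ta_1)$, and then the inequality $x_2a_2\ge k-(a_2-1)a_1\ge-(a_2-1)>-a_2$, with integrality upgrading $x_2>-1$ to $x_2\ge 0$ --- this last divisibility observation is indeed exactly what buys the sharp constant $(a_1-1)(a_2-1)$ over the cruder threshold $(a_2-1)a_1$. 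All steps check out, including the degenerate cases (if $a_2=1$ the window forces $x_1=0$ and $x_2=k\ge 0$), and your closing verification that $a_1a_2-a_1-a_2$ is not representable correctly establishes sharpness, a fact the paper does not use but which confirms the constant cannot be improved. One cosmetic point: the uniqueness of the representative with $x_1\in\{0,\ldots,a_2-1\}$ follows from the parametrization of the solution set alone (coprimality was already consumed in deriving that parametrization), so the parenthetical appeal to invertibility of $a_1$ modulo $a_2$ is not actually needed there.
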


\begin{lemma}\label{L2}
Let $d\ge 2$ be an integer and $p$ a prime. Then, for every fixed integer $m$, there exists $t,\; 1\le t\le d$ such that $p$ does not divide $(m+t)^d-m^d$.
\end{lemma}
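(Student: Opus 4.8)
The plan is to argue by contradiction. Suppose that $p\mid(m+t)^d-m^d$, that is, $(m+t)^d\equiv m^d\pmod p$, for every $t\in\{1,\ldots,d\}$ (the case $t=0$ holding trivially), and aim for a contradiction. The natural split is according to whether $p>d$ or $p\le d$, since the two regimes call for different tools: a root-counting argument in the large-prime case, and a direct construction of a witness $t$ in the small-prime case.

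First I would treat the case $p>d$ by a polynomial argument over the field $\mathbb{F}_p$. Consider $P(x)=x^d-m^d$, a nonzero polynomial of degree $d$, since its leading coefficient is $1\not\equiv0\pmod p$. Under the contradiction hypothesis, each of the $d+1$ consecutive integers $m,m+1,\ldots,m+d$ is a root of $P$ modulo $p$: indeed $P(m+t)=(m+t)^d-m^d\equiv0$ for $t=1,\ldots,d$, and $P(m)=0$ trivially. Because $p>d$, for $0\le i<j\le d$ we have $0<j-i\le d<p$, so the residues $m,m+1,\ldots,m+d$ are pairwise distinct in $\mathbb{F}_p$. Thus $P$ would possess $d+1$ distinct roots, which is impossible for a nonzero degree-$d$ polynomial over a field. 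This contradiction settles the case $p>d$.

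The remaining case $p\le d$ is where the counting argument breaks down, because the residues $m,\ldots,m+d$ need no longer be distinct; this is the step I expect to be the main obstacle, and I would handle it by exhibiting a suitable $t$ directly. If $p\nmid m$, then since the $d\ge p$ consecutive integers $m+1,\ldots,m+d$ must contain a multiple of $p$, there is a $t_0$ with $1\le t_0\le d$ and $p\mid m+t_0$; for this $t_0$ one has $(m+t_0)^d\equiv0\not\equiv m^d\pmod p$, so $t=t_0$ works. If instead $p\mid m$, take $t=1$: then $m+1\equiv1\pmod p$, whence $(m+1)^d\equiv1\not\equiv0\equiv m^d\pmod p$, and again $p\nmid(m+t)^d-m^d$. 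These sub-cases are exhaustive, completing the argument. The genuine content thus lies entirely in the small-prime regime $p\le d$, where one simply exploits either the presence of a multiple of $p$ among $d$ consecutive integers or the degeneracy $m\equiv0\pmod p$.
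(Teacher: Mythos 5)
Your proof is correct, but it takes a genuinely different route from the paper's. The paper works with the difference polynomial $q(x)=(x+1)^d-x^d$, which has degree $d-1$ over GF$(p)$ and is nonzero there since $q(p)\equiv 1 \pmod p$; hence among the $d$ values $q(m),\ldots,q(m+d-1)$ some $q(m+t-1)$ is prime to $p$, and a telescoping sum $(m+t)^d-m^d=\sum_{i=0}^{t-1}q(m+i)$ (with $t$ chosen as the first index where $q$ is nonzero mod $p$, so the earlier summands vanish) produces the desired $t$ in one uniform argument. You instead apply root counting directly to $P(x)=x^d-m^d$, which has degree $d$, and this forces your case split: for $p>d$ the $d+1$ residues $m,\ldots,m+d$ are distinct and exceed the root bound, while for $p\le d$ you must construct a witness by hand, using a multiple of $p$ among $m+1,\ldots,m+d$ when $p\nmid m$, and $t=1$ when $p\mid m$. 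What your version buys is transparency: each case is elementary and self-contained, and your small-prime analysis makes explicit something the paper passes over quickly --- its claim that some $q(m+i)$ is nonzero mod $p$ follows from root counting alone only when $p\ge d$, and for $p<d$ it implicitly relies on the same observation you use, namely that $q(0)\equiv 1\pmod p$ (equivalently, multiples of $p$ are non-roots), which is exactly what the computation $q(p)\equiv 1$ supplies. What the paper's version buys is economy: by dropping to degree $d-1$ and telescoping, it avoids the case distinction on the size of $p$ and extracts $t$ from a single counting statement. Both proofs are complete and correct; yours trades the telescoping trick for a concrete two-regime analysis.
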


\begin{proof}
Consider the polynomial $q(x)=(x+1)^d-x^d$ of degree $d-1$. We have $q(p)\equiv 1\; \mod p.$ Therefore, $q(x)$ is not the zero-polynomial and it has at most $d-1$ roots over GF$(p)$. Consequently, at least one of the numbers $q(m), q(m+1),\ldots, q(m+d-1)$ is not divisible by $p$. If $p$ does not divide $q(m)$, we are done. Otherwise, suppose that $p$ divides $q(m),\ldots,q(m+t-2)$, but does not divide $q(m+t-1)$ for some $t, 2\le t\le d$. Then we have
$$(m+t)^d-m^d=\sum_{i=0}^{t-1}q(m+i)\equiv q(m+t-1)\not\equiv 0\;\mod p,$$
as required.
\end{proof}

\begin{corollary}\label{Cor2}
For any integers $d, m>0$, the greatest common divisor of the numbers $(m+1)^d-m^d, (m+2)^d-m^d,\ldots, (m+d)^d-m^d$ is $1$. \hfill$\Box$
\end{corollary}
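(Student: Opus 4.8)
The plan is to deduce Corollary~\ref{Cor2} directly from Lemma~\ref{L2} with almost no extra work, since the hard structural content is already packaged in that lemma. The statement to prove is that for fixed $d, m > 0$, the numbers
\[
(m+1)^d - m^d,\; (m+2)^d - m^d,\; \ldots,\; (m+d)^d - m^d
\]
have greatest common divisor equal to $1$. The natural approach is to show that no prime $p$ divides all $d$ of these numbers; once that is established, their gcd can have no prime factor and must therefore equal $1$.

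First I would fix an arbitrary prime $p$ and apply Lemma~\ref{L2} with this $p$, this $d$, and this $m$. The lemma guarantees the existence of some integer $t$ with $1 \le t \le d$ such that $p$ does not divide $(m+t)^d - m^d$. But $(m+t)^d - m^d$ is precisely one of the terms in our list (the term indexed by $t$, since $t$ ranges over $1, \ldots, d$). Hence $p$ fails to divide at least one element of the list, so $p$ cannot be a common divisor of all of them.

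Since $p$ was an arbitrary prime, I conclude that the greatest common divisor $g$ of the $d$ numbers has no prime divisor: if $g > 1$ it would have some prime factor $p$, and that $p$ would divide every term in the list, contradicting the previous paragraph. Therefore $g = 1$, which is exactly the claim.

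I do not anticipate any genuine obstacle here, as Corollary~\ref{Cor2} is an immediate consequence of Lemma~\ref{L2}; the only point requiring mild care is the bookkeeping that the index $t$ supplied by the lemma lands inside the range $1, \ldots, d$ that enumerates the listed differences, and that "gcd $= 1$" is equivalent to "no common prime divisor." Both are routine.
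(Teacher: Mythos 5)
Your proof is correct and matches the paper's intended argument exactly: the corollary is stated with no written proof precisely because it follows from Lemma~\ref{L2} in the way you describe, by letting $p$ range over all primes and noting that each one fails to divide some term $(m+t)^d-m^d$ with $1\le t\le d$. Nothing is missing.
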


\section{Proof of Theorem 2}

Let $\eps$ be a (small) positive number, which will be fixed throughout this section.

\smallskip

Suppose first that $n=a^{3d}$ for a positive integer $a$. Dividing the unit cube into $a^{2d}$  {\em small} cubes of side length $1/a^2$, and then each small cube into $a^d$ {\em tiny} subcubes of the same size, we are done.

\smallskip

If $n$ is not of the above special form, we have $a^{3d}<n<(a+1)^{3d}$ for some integer $a>0$. Suppose that $n$ is so large that we have $a>d(d+1)$ and
\begin{equation}\label{formula1}
(a-1)(1+\eps)>a+d
\end{equation}

Using the assumption $a>d(d+1)\ge 6$, we have that $a^{2d}(a+4)^d>(a+1)^{3d}$. Thus, there exists an integer $c,\; 0\le c\le 3,$ with
\begin{equation}\label{formula2}
a^{2d}(a+c)^d\le n<a^{2d}(a+c+1)^d.
\end{equation}
Let $m:=a+c$.

\smallskip

We now construct a tiling of the unit cube with $n$ smaller cubes, each of side length $\frac1{a^2(m+i)}$ for some $i,\;-1\le i\le d$. Now these smaller cubes will be called {\em tiny}.

In the first step, divide the unit cube into $a^{2d}$ {\em small} cubes of side length $1/a^2$. If we subdivided each small cube into $m^d$ subcubes of side length $1/(a^2m)$, we would obtain only $a^{2d}m^d\le n$ {tiny} cubes of the same size. In order to obtain precisely $n$ tiny cubes, we need to increase their number by $$k:=n-a^{2d}m^d.$$ To achieve this, for $i=1, 2,\ldots, d,$ we pick $x_i$ small cubes, and instead of subdividing them into $m^d$ tiny subcubes, we subdivide them into $(m+i)^d$ ones. This results in an increase of $\sum_{i=1}^dx_ia_i$, where $$a_i= (m+i)^d-m^d.$$
However, if $k$ is relatively small (for example, if $k=1$), we may not be able to write $k$ in the form $\sum_{i=1}^dx_ia_i$. Therefore, we also select $y_1$ small cubes, different from the previously picked ones, and instead of subdividing them into $m^d$ pieces, we subdivide them into only $(m-1)^d$ tiny subcubes of the same size. This will reduce the number of tiny subcubes by $y_1b_1$, where
$$b_1=m^d-(m-1)^d.$$

By Corollary~\ref{Cor2}, the greatest common divisor of $a_1,\ldots,a_d,b_1$ is $1$. Applying Lemma~\ref{L1} with $r=d, s=1$, we can find nonnegative numbers $x_1,\ldots,x_d,y_1$ such that
\begin{equation}\label{formula3}
\sum_{i=1}^dx_ia_i-y_1b_1=k,
\end{equation}
as required. If, in addition, we manage to guarantee that
\begin{equation}\label{formula4}
x_1+\ldots+x_d+y_1\le a^{2d},
\end{equation}
then we are done, because there are sufficiently many small cubes in which the required replacements can be performed.

\smallskip

In what follows, we show how to find a solution of~(\ref{formula3}), for which condition~(\ref{formula4}) is satisfied. Start with any solution of~(\ref{formula3}) and, as long as possible, repeat the following two steps, producing other solutions.
\begin{enumerate}
\item If $x_i\ge b_1$ for some $i,\; 1\le i\le d,$ and $y_1\ge a_d$, then replace $x_i$ by $x_i-b_1$ and $y_1$ by $y_1-a_d$.
\item If $x_i\ge a_d$ for some $i,\; 1\le i < d$, then replace $x_i$ by $x_i-a_d$ and $x_d$ by $x_d+a_i$.
\end{enumerate}
Both of these operations, independently, can be performed at most a bounded number of times. Thus, the above procedure will terminate after a finite number of steps. We claim that the resulting solution satisfies~(\ref{formula4}).

\smallskip

It follows from $m\ge a>d(d+1)$ that
\begin{equation}\label{formula5}
a_d=(m+d)^d-m^d=\left((1+\frac{d}{m})^d-1\right)m^d<(e-1)m^d.
\end{equation}
\noindent We also have
\begin{equation}\label{formula5+}
b_1=m^d-(m-1)^d\le (a+3)^d-(a+2)^d<(e-1)a^d
\end{equation}
and
\begin{equation}\label{formula6}
m^d\le(a+3)^d=\left(1+\frac{3}{a}\right)^da^d<\left(1+\frac3{d(d+1)}\right)^da^d<ea^d.
\end{equation}

We distinguish two cases.
\medskip

\noindent{\textsc{Case 1:}} $y_1\ge a_d$

Since we cannot perform step 1, we have $x_i<b_1$ for all $i\; (1\le i\le d)$. From~(\ref{formula3}), we obtain
$$y_1b_1 <\sum_{i=1}^dx_ia_i<\sum_{i=1}^db_1a_i\le db_1a_d.$$
In view of~(\ref{formula5}) and~(\ref{formula6}), this yields that
$$y_1<da_d<d(e-1)m^d<de(e-1)a^d.$$
Taking~(\ref{formula5+}) into account, in this case we have
$$x_1+\ldots+x_d<db_1<d(e-1)a^d.$$
Combining the last two inequalities, we get
$$x_1+\ldots+x_d+y_1<d(e-1)a^d+de(e-1)a^d=d(e^2-1)a^d<a^{2d},$$
so~(\ref{formula4}) holds, as required.

\medskip

\noindent{\textsc{Case 2:}}  $y_1 < a_d$

Since we cannot perform step 2 of the algorithm, we have $x_i<a_d$ for all $i,\; 1\le i<d$. Now, using~(\ref{formula5}) and~(\ref{formula6}), we obtain
\begin{equation}\label{formula7}
x_1+\ldots+x_{d-1}+y_1<da_d<d(e-1)m^d<de(e-1)a^d.
\end{equation}

It remains to bound $x_d$ from above. From~(\ref{formula3}), we have $x_da_d\le k+y_1b_1$. Since $k=n-a^{2d}m^d,$ in view of~(\ref{formula2}), we get
$$x_da_d<a^{2d}((m+1)^d-m^d)+y_1b_1<a^{2d}((m+1)^d-m^d)+a_db_1.$$
Hence,
\begin{equation}\label{formula8}
x_d-b_1<a^{2d}\frac{(m+1)^d-m^d}{a_d}=a^{2d}\frac{(m+1)^d-m^d}{(m+d)^d-m^d}<a^{2d}\frac{1}{d}.
\end{equation}
The last inequality follows from the fact that
$$(m+d)^d-m^d=\sum_{i=0}^{d-1}((n+i+1)^d-(n+i)^d)>d((n+1)^d-n^d).$$
Adding~(\ref{formula7}),~(\ref{formula8}), and~(\ref{formula5+}), and using that $a>d(d+1)$, we conclude that
$$x_1+\ldots+x_d+y_1<de(e-1)a^d+a^{2d}\frac{1}{d}+(e-1)a^d$$
$$=a^{2d}\left(\frac1d+\frac{(de+1)(e-1)}{a^d}\right)<a^{2d}.$$
Thus, in both cases,~(\ref{formula4}) is true. This completes the proof of the Theorem 2. \hfill$\Box$

\section{Concluding remarks, open problems}

The proof in the previous section shows that, for any sufficiently large $n$, the $d$-dimensional unit cube can be tiled with $n$ smaller cubes having at most $d+2$ different sizes. Moreover, we can even require these smaller cubes to be of roughly the same size. Lemma~\ref{L0} states that for $d=2$, {\em two} different sizes suffice. For any $d$ larger than $2$, if we do drop the condition that the ratio of the sizes of the largest and smallest cubes must tend to $1$, one can decompose a cube into smaller cubes of only {\em three} different sizes.

\begin{theorem}\label{pro}
For any $d\ge 3$, there is an $n_0=n_0(d)$ with the following property. For any integer $n\ge n_0(d)$, there exists a tiling of a cube with precisely $n$ smaller cubes, each of side length $1$, $\frac12$, or $\frac1{2^d-1}$.
\end{theorem}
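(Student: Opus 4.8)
The plan is to build every tiling from one elementary operation and then reduce the problem to an additive number-theoretic question. First I would fix a positive integer $L$, start from a cube of side length $L$ cut into $L^d$ unit cubes, and then recut each unit cube in one of three ways: leave it untouched (side $1$), cut it into $2^d$ cubes of side $\tfrac12$, or cut it into $(2^d-1)^d$ cubes of side $\tfrac1{2^d-1}$. If $x$ of the unit cubes are recut in the second way and $y$ of them in the third way (with $x+y\le L^d$), the resulting tiling uses only the three permitted sizes and its total number of cubes is
\[
n = (L^d - x - y) + x\,2^d + y\,(2^d-1)^d = L^d + x\,g_1 + y\,g_2 ,
\]
where $g_1 := 2^d-1$ and $g_2 := (2^d-1)^d-1$. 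Thus it suffices to show that for every sufficiently large $n$ one can choose $L$ and nonnegative integers $x,y$ with $x+y\le L^d$ satisfying this equation.

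Next I would bring in the number theory. Since $g_2=(2^d-1)^d-1\equiv -1\pmod{2^d-1}$, we have $\gcd(g_1,g_2)=1$, so by Lemma~\ref{L*} every integer $N\ge (g_1-1)(g_2-1)$ is representable as $N=x g_1+y g_2$ with $x,y\ge 0$. Here I would insist on the \emph{efficient} representation with $0\le x<g_2$, which forces $y=(N-xg_1)/g_2<N/g_2$ and hence $x+y<g_2+N/g_2$. Applying this to $N:=n-L^d$, the capacity constraint $x+y\le L^d$ is met whenever $g_2+(n-L^d)/g_2\le L^d$, i.e. whenever
\[
L^d + g_1 g_2 \ \le\ n \ \le\ (2^d-1)^d L^d - g_2^2 = \big(L(2^d-1)\big)^d - g_2^2 .
\]
(Taking $N\ge g_1g_2\ge (g_1-1)(g_2-1)$ also guarantees $y\ge 0$ in this representation.) In other words, for each fixed $L$ the achievable values of $n$ cover the whole interval $I_L:=\big[L^d+g_1g_2,\ (L(2^d-1))^d-g_2^2\big]$.

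Finally I would let $L$ vary and check that the intervals $I_L$ eventually cover all large integers, which comes down to verifying that consecutive intervals overlap, i.e.
\[
\big(L(2^d-1)\big)^d - g_2^2 \ \ge\ (L+1)^d + g_1 g_2 .
\]
The left-hand side grows like $(2^d-1)^d L^d$ while the right-hand side grows only like $L^d$, and $(2^d-1)^d\ge 7^3>1$ for $d\ge 3$; hence the inequality holds for all $L$ beyond some threshold $L_0$. Setting $n_0:=L_0^{\,d}+g_1g_2$ then gives the theorem.

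I expect the main obstacle to be precisely the capacity constraint $x+y\le L^d$: Lemma~\ref{L*} by itself only yields \emph{some} representation of $N$, and a careless choice could require recutting more unit cubes than the $L^d$ available. Using the minimal-$x$ representation to bound $x+y$ is the crux of the argument. Once that is secured, the coverage of all large $n$ is essentially automatic, thanks to the enormous multiplicative gap $(2^d-1)^d$ between the side length of a unit cube and that of a tiny cube.
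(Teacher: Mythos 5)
Your proposal is correct and follows essentially the same route as the paper: tile a large cube by unit cubes, subdivide some into side-$\frac12$ cubes (gaining $2^d-1$) and some into side-$\frac1{2^d-1}$ cubes (gaining $(2^d-1)^d-1$), and invoke Lemma~\ref{L*} for these two coprime increments. The only difference is bookkeeping: the paper adapts the big cube's side (taking $a-1$ with $a^d\le n<(a+1)^d$) so that $k=n-(a-1)^d$ is small and \emph{any} Sylvester representation satisfies the capacity bound via $x_1+x_2\le k/(2^d-1)$, whereas you fix $L$, use the canonical minimal-$x$ representation to control $x+y$, and cover all large $n$ by overlapping intervals --- both are valid.
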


\begin{proof}
Since $(2^d-1)^d$ is a multiple of $2^d-1$, the numbers $(2^d-1)^d-1$ and $2^d-1$ are relatively prime. Thus, we can apply Lemma~\ref{L*} with $a_1=2^d-1$ and $a_2=(2^d-1)^d-1$. It implies that every integer $k\ge 2^{(d+1)d}$ can be expressed as
\begin{equation}\label{felbontas}
k=x_1(2^d-1)+x_2((2^d-1)^d-1),
\end{equation}
for suitable integers $x_1, x_2\ge 0$.

Suppose that $n>2^{(d+3)d}$. Then we can choose an integer $a\ge 2^{d+3}$ such that $a^d\le n<(a+1)^d$. Consider an $(a-1)\times (a-1)$ cube $C$ of side length $a-1$, and decompose it into $(a-1)^d$ unit cubes. Set $k:=n-(a-1)^d$. Observe that
$$k\ge a^d-(a-1)^d\ge (2^{d+3})^d-(2^{d+3}-1)^d\ge d(2^{d+3}-1)^{d-1}\ge 2^{(d+1)d}.$$
Therefore, we can write $k$ in the form (\ref{felbontas}), where $x_1$ and $x_2$ are nonnegative integers and, clearly,
\begin{equation}\label{felso}
x_1+x_2\le\frac{k}{2^d-1}=\frac{n-(a-1)^d}{2^d-1}<\frac{(a+1)^d-(a-1)^d}{2^d-1}<(a-1)^d.
\end{equation}
When we subdivide a unit cube in $C$ into cubes of side length $\frac12$, the number of cubes increases by $2^d-1$. When we subdivide a unit cube in $C$ into cubes of side length $\frac1{2^d-1}$, the number of cubes in the tiling increases by $(2^d-1)^d-1$. According to (\ref{felso}), there are enough unit cubes in $C$ to make $x_1$ subdivisions of the first kind and $x_2$ subdivisions of the second kind. The resulting tiling consists of precisely $n$ cubes, as required.
\end{proof}

The last theorem can be generalized in possibly two different ways.

\begin{problem}\label{P1}
Can one find, for every sufficiently large positive integer $n$, {\em two} $3$-dimensional cubes with side lengths smaller than $1$ such that the unit cube can be tiled with their congruent copies?
\end{problem}

\begin{problem}\label{P2}
Does there exist, for every sufficiently large positive integer $n$, a tiling of the unit cube with $n$ smaller cubes having at most {\em three} (or at most {\em two}) different side lengths so that the ratios of these side length tend to $1$, as $n\rightarrow\infty$?
\end{problem}

Theorem 2 shows that in $3$-dimensional space one can find tilings consisting of copies of {\em four} different cubes such that the ratios of their side lengths tend to $1$. However, in higher dimensions the number of different cubes used in our construction grows.

\begin{problem}\label{P3}
Does there exist an absolute constant $k$ such that for any $d\ge 3$ there exists $n_0(d)$ satisfying the following property? For every $n\ge n_0(d)$, there is a tiling of a cube in $\mathbb{R}^d$ with precisely $n$ cubes of at most $k$ different side lengths.
\end{problem}

\end{document}